\title{$R(5,5) \leq 46$}
\author{Vigleik Angeltveit and Brendan D. McKay}
\address[1]{Mathematical Sciences Institute \\
Australian National University \\
Canberra, ACT 2601 \\
Australia}
\address[2]{School of Computing \\
Australian National University \\
Canberra, ACT 2601 \\
Australia}
\email[1]{vigleik.angeltveit@anu.edu.au}
\email[2]{brendan.mckay@anu.edu.au \textrm{(corresponding author)}}
\newtheorem{theorem}{Theorem}[section]
\newtheorem{thm}[theorem]{Theorem}
\newtheorem{lemma}[theorem]{Lemma}
\newtheorem{prop}[theorem]{Proposition}
\theoremstyle{definition}
\newtheorem{remark}[theorem]{Remark}
\let\c@equation\c@theorem
\numberwithin{equation}{section}
 \newcommand{\cR}{\mathcal{R}}        
\newcommand{\kmin}{k_{\mathrm{min}}}
\newcommand{\kmax}{k_{\mathrm{max}}}
\newcommand{\emin}{e_{\mathrm{min}}}
\newcommand{\emax}{e_{\mathrm{max}}}
\renewcommand{\dfrac}[2]{\lower0.12ex\hbox{\large$\textstyle\frac{#1}{#2}$}}
\newcommand{\Dfrac}[2]{\raise0.05ex\hbox{\small$\displaystyle\frac{#1}{#2}$}}
\newcommand{\excess}{\operatorname{excess}}
\begin{document}

\begin{abstract}
We prove that the Ramsey number $R(5,5)$ is less than or equal to~$46$.
The proof uses a combination of linear programming and checking a large number of cases by computer.
All of the computational parts of the proof were independently implemented
by both authors, with consistent results.
\end{abstract}

\maketitle

\section{Introduction}
The Ramsey number $R(s,t)$ is defined to be the smallest $n$ such that every graph of order $n$ contains either a clique of $s$ vertices or an independent set of $t$ vertices. See \cite{Ra94} for a survey on the currently known bounds for small Ramsey numbers.

\begin{theorem} \label{t:main}
The Ramsey number $R(5,5)$ is less than or equal to $46$.
\end{theorem}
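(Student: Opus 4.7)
My plan is to argue by contradiction: assume $G$ is a graph on $46$ vertices containing neither $K_5$ nor an independent $5$-set, and derive impossibility. The first step is the classical neighborhood reduction. Since $R(4,5)=25$, the subgraph induced on $N(v)$ contains neither $K_4$ nor an independent $5$-set, so $|N(v)|\leq 24$; dually, the non-neighborhood of $v$ induces a graph with no $K_5$ and no independent $4$-set, so it also has order at most $24$. For $|V(G)|=46$ this forces every degree into $\{21,22,23,24\}$, and each link $N(v)$ must be isomorphic to one of the $(4,5;d)$-graphs with $d\in\{21,\ldots,24\}$ that have been completely enumerated in previous work. I would take this catalogue as input data.

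Next I would set up a linear program capturing the global statistics of $G$. Introduce variables $x_H$ counting the vertices whose neighborhood is isomorphic to a given $(4,5)$-graph $H$, together with analogous variables for non-neighborhood types. Constraints include the vertex sum $\sum_H x_H = 46$, and edge, triangle, and $K_4$ identities obtained by double-counting these subgraphs through vertex links. Additional pair constraints are crucial: for an edge $uv$, $N(u)\cap N(v)$ must be triangle-free, otherwise a $K_5$ appears; for a non-edge it must contain no independent $4$-set, otherwise an independent $5$-set appears. Further invariants such as joint degree distributions on edges and non-edges can be layered in. The hope is that solving this LP either already yields infeasibility or drastically reduces the admissible global profiles to a tractable list.

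For each surviving profile I would turn to exhaustive computer search: build $G$ incrementally by choosing a vertex, a compatible $(4,5)$-graph for its neighborhood, and the induced edges into the partial graph; prune whenever a forbidden clique or independent $5$-set is created or a local link is forced into a type not on the allowed list; and employ canonical labeling (for instance via \texttt{nauty}) to avoid enumerating isomorphic extensions. The main obstacle will be computational: the number of ways to glue together $46$ overlapping $(4,5)$-graphs is enormous, and the LP must be sharp enough to cut the candidate set down to something a backtracking search can exhaust. This is precisely where the two strands named in the abstract must meet, and the independent replication of both the LP and the search is the safeguard against implementation error.
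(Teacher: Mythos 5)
Your proposal starts with the correct neighborhood reduction (every vertex of a hypothetical $F\in\cR(5,5,46)$ has degree in $\{21,\ldots,24\}$ and its link lies in $\cR(4,5,d)$), but it then rests on a premise that is false and that is, in fact, the entire difficulty of the problem: the sets $\cR(4,5,n)$ for $n=21,22,23$ have \emph{not} been completely enumerated, and cannot be. Only $\cR(4,5,24)$ is known in full (352{,}366 graphs); the estimates in the paper's appendix give $|\cR(4,5,21)|\approx 5.6\times 10^{17}$, $|\cR(4,5,22)|\approx 1.8\times 10^{15}$, $|\cR(4,5,23)|\approx 9\times 10^{10}$. So ``I would take this catalogue as input data'' is not available, your LP with one variable $x_H$ per isomorphism type $H$ would have on the order of $10^{17}$ variables, and an incremental backtracking search over links chosen from such a catalogue is hopeless. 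This is precisely the obstacle the paper is built around, and your proposal does not identify a way past it.

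The paper's actual route, which your proposal is missing, is to avoid the full catalogues entirely. The key tool is the subgraph-counting identity $\excess(F)=\sum_{v}\bigl(e(F_v^-)-e(F_v^+)-\tfrac12 d(v)(n-2d(v))\bigr)=0$. Rewriting it for $n=46$, one sees that a vertex $v$ contributes at least $+1$ to the excess \emph{unless} its neighborhood has unusually many edges (or its dual neighborhood unusually few), i.e. unless $F_v^+\in E$ for a carefully chosen set $E=\cR(4,5,24,e\ge 127)\cup\cR(4,5,23,e\ge 119)\cup\cR(4,5,22,e\ge 113)\cup\cR(4,5,21,e=107)$, or dually. Since these high-edge-count families are small enough to enumerate (a few hundred thousand graphs total), the problem collapses to a counting argument on the ``special'' vertices plus a structural Proposition~(\ref{p:gluered}) constraining the induced subgraph $F[E]$, followed by a manageable set of gluings of pairs of pointed graphs as in Theorem~\ref{t:gluered}. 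Your LP instinct and your gluing instinct are both in the right spirit, but without the observation that the excess identity lets you restrict to extremal-edge-count Ramsey graphs, the computation you describe is many orders of magnitude out of reach.
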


The lower bound of $43$, which was established by Exoo \cite{Ex89} in 1989, is still the best. The upper bound of $49$ was proved by the second author and Radziszowski \cite{McRa97}, and this was improved by the authors \cite{AnMc18} to $48$.

A \textit{Ramsey graph of type} $(s,t)$ is a simple graph with no clique of size $s$ or independent
set of size~$t$.
Let $\cR(s,t)$ denote the set of (isomorphism classes of) Ramsey graphs of type $(s,t)$, and let 
$\cR(s,t,n)$ denote the subset of those with $n$ vertices.
Let $\cR(s,t,n,e=e_0)$ denote the subset of $\cR(s,t,n)$ having $e_0$ edges, and similarly for $\cR(s,t,n,e \leq e_0)$ and $\cR(s,t,n,e \geq e_0)$. 
Let $e(s,t,n)$ and $E(s,t,n)$ denote the minimal and maximal number of edges of such a Ramsey graph, respectively.

The improvements in the upper bound for $R(5,5)$ are intimately connected to our understanding of $\cR(4,5,n)$ for $n$ large. Recall from \cite{McRa95} that $R(4,5) = 25$. It follows that any vertex $v$ of a graph in $\cR(5,5,m)$ must have degree $m-25 \leq d(v) \leq 24$.

It follows immediately that $R(5,5) \leq 50$, and that any graph in $\cR(5,5,49)$ must be regular of degree $24$. This, together with a partial census of the set $\cR(4,5,24)$, allowed the second author and Radziszowski \cite{McRa97} to prove that $R(5,5) \leq 49$. Their argument can be summarised as follows. First, they proved that $E(4,5,24) = 132$, and found the two graphs in $\cR(4,5,24,e=132)$.

Given a graph $F$ and a vertex $v \in V(F)$, let $F_v^+$ be the induced subgraph on the vertices adjacent to $v$ and let $F_v^-$ be the induced subgraph on the vertices not $v$ or adjacent to~$v$.
We call $F_v^+$ and $F_v^-$ the \textit{neighbourhood} and \textit{dual neighbourhood} of $v$ in~$F$, respectively.

Recall the following identity, e.g.\ from the $m=2$ case of \cite[Theorem 2.2]{McRa97},
which can be proved by counting the subgraphs induced by sets of three vertices.
If $F$ has $n$ vertices,
writing $e(\cdot)$ for the number of edges in its argument and
$d(v)$ for the degree of vertex~$v$, we have
\begin{equation} \label{e:edgeequation}
   \excess(F) =0, \text{~where~}
   \excess(F) = 
  \!\!\! \sum_{v\in V(F)}\! \bigl( e(F_v^-)-e(F_v^+) - \dfrac12d(v)(n-2d(v))\bigr).
\end{equation}
Given a hypothetical graph $F \in \cR(5,5,49)$, the above equation forces the neighbourhoods of all the vertices in $F$ to be one of those two graphs in $\cR(4,5,24,e=132)$ and the dual neighbourhoods to be one of the complement graphs.
The authors then used further subgraph identities to obtain a contradiction.

\begin{figure}[ht]
\[ \includegraphics[scale=0.6]{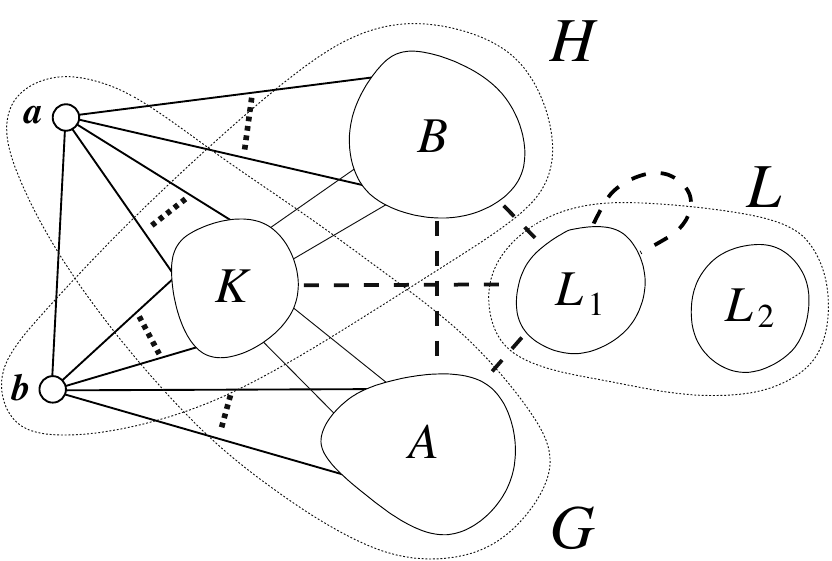} \]
\caption{The structure of a graph in $\cR(5,5,n)$.\label{fig:pic46}}
\end{figure}

\section{High level description of the method}\label{s:highlevel}

The method used in both \cite{AnMc18} and the present paper can be summarized with
the help of Figure~\ref{fig:pic46}, which shows a graph in $\cR(5,5,n)$, where $n=48$
in \cite{AnMc18} and $n=46$ here.
Consider two adjacent vertices $a,b$.  The neighbourhood of $a$ is $H=\{b\}\cup K\cup B$
and that of $b$ is $G=\{a\}\cup K\cup A$. Both of these neighbourhoods are in $\cR(4,5)$.
$L$ is the part of the graph adjacent to neither~$a$ nor~$b$.
The subgraph consisting of $G$ and $H$, overlapping in $K$, plus the edge $\{a,b\}$
but not the edges between $A$ and $B$, will be denoted by $G\cup_K H$.

Using a mixture of theory and computation we compile a collection of pairs
$\{(G,a), (H,b)\}$ such that $G_a^+$ is isomorphic to $H_b^+$ and
every graph in $\cR(5,5,n)$ necessarily contains a pair in our collection
overlapped as in the figure with $a$ adjacent to~$b$.
We call a pair like $(G,a)$ a \textit{pointed graph}.
Next, for each pair of pointed graphs in our collection, we determine all the ways
to fill in edges in the places indicated by heavy dashed lines in the figure
without creating cliques or independent sets of size~5.
We call this \textit{gluing along the edge $ab$}.
The choice of $L_1\subseteq L$ is arbitrary so we may choose any $L_1$ that
is necessarily present if this structure extends to a graph in $\cR(5,5,n)$.

In our previous work \cite{AnMc18}, our strategy was to start with $L_1=\emptyset$, so that only
edges between $A$ and $B$ were sought.
This produced a large but manageable number of solutions.
Then we attempted to extend each solution to a graph in $\cR(5,5)$ by adding one
additional vertex.
Since no such extension existed in any of the cases, we concluded that $\cR(5,5,48)=\emptyset$.

With $n=46$, choosing $L_1=\emptyset$ is impractical as the number of valid ways
to add edges between $A$ and $B$ is extremely large.
Instead, we chose a larger $L_1$ and found that the number of solutions
became manageable.
The method for choosing $L_1$ differed between the approaches of the two
authors, and will be described in Sections~\ref{s:first} and~\ref{s:second}.

\medskip
We next outline the method for choosing a collection of pairs $\{(G,a), (H,b)\}$ of pointed graphs.

The proof that $R(5,5) \leq 48$ in \cite{AnMc18} can be summarised as follows. First we determined the complete catalogue of $\cR(4,5,24)$, which contains a total of $352{,}366$ graphs. Given a hypothetical graph $F \in \cR(5,5,48)$, either $F$ or its complement must have a pair of adjacent vertices $a, b$ of degree $24$ whose neighbourhoods intersect in some subgraph $K \in \cR(3,5,d)$ for $d \leq 11$.
(The fact that it is possible to choose $d \leq 11$ requires a proof, see \cite{AnMc18}.) Let $G = F_b^+$ and $H = F_a^+$ be the neighbourhoods of $b$ and $a$ in $F$. Then $K=G_a^+ = H_b^+$, and a large subgraph of $F$ can be reconstructed as $G \cup_K H$. Hence it suffices to consider all ways of gluing $G \cup_K H$ for $K \in \cR(3,5,d)$ where $d \leq 11$ and $G, H \in \cR(4,5,24)$. 
There is one gluing operation required for each pair of pointed graphs of type $K$ and for each automorphism of $K$, and in total we computed approximately $2$ trillion gluing operations. As mentioned above we will call this gluing along an edge. Note that \cite{AnMc18} relied on the complete catalogue of $\cR(4,5,24)$ but did not use any linear programming.

In the current paper we take the ideas from \cite{McRa97} and \cite{AnMc18} much further. For a hypothetical graph $F \in \cR(5,5,46)$ every vertex must have degree $d(v) \in \{21,22,23,24\}$, and we consider parts of $\cR(4,5,n)$ for $n=21,22,23,24$. From \cite{McRa95} we know that $\cR(4,5,n)$ is quite large for $n=21,22,23$. Indeed, in \cite{McRa95} the authors estimated that $|\cR(4,5,21)| \approx 5.5 \times 10^{17}$, $|\cR(4,5,22)| \approx 1.9 \times 10^{15}$ and $|\cR(4,5,23)| \approx 10^{11}$.
(See the Appendix for updated estimates.)
 Hence any approach that relies on the complete catalogue of these Ramsey graphs is impractical.

Instead we use linear programming to reduce the number of graphs we need to consider. The basic idea is to determine $\cR(4,5,n, e \geq e_0)$ for $n=21, 22, 23$ and suitable $e_0$ (which depends on $n$) and exclude these graphs by gluing along an edge as described above. Then we can use linear programming to finish the proof of Theorem \ref{t:main}.

In more detail, we determine the sets $\cR(4,5,23, e \geq 119)$, $\cR(4,5,22, e \geq 113)$ and $\cR(4,5,21, e \geq 107)$. We also use $\cR(4,5,24, e \geq 127)$, determined in \cite{AnMc18}.
Then we glue these graphs along an edge. There are some additional challenges:

\begin{enumerate}
 \item The method used in \cite{McRa95, AnMc18} to determine $\cR(4,5,24)$ is too slow to determine $\cR(4,5,n)$ for $n=21, 22, 23$, so we cannot simply compute all of $\cR(4,5,n)$ and throw away the graphs with too few edges. We explain our approach in Section \ref{s:census}.
 \item The method used in \cite{AnMc18} to glue two graphs along an edge produces far too many output graphs, and even if we could collect them all it would take too long to compute all possible ways to add one vertex while staying within $\cR(5,5)$. We get around that by adding extra vertices straight away when gluing along an edge as in \cite{AnMc18}. 
 \end{enumerate}

In addition, we show in Section \ref{s:gluepairs} that we do not have to perform all possible gluing operations. In fact, we show that we can leave out some of the more time-consuming gluing operations.

We estimate that completing the census of $\cR(4,5,n, e \geq e_0)$ took approximately $15$ years of CPU time while gluing the necessary graph took another $15$ years of CPU time. Hence the whole project took about $30$ years of CPU time for the first author to complete.

In the interests of confidence, all the computations were repeated by the second author using independent programs and usually with different methods.
This replication took about $50$ years of additional CPU time.
For some tasks, such as creation of the catalogues described in the following section,
the outputs of the two implementations could be directly compared.
For the remaining much more complicated stages, the two approaches were deliberately
intended to be disjoint to 
avoid the well-known axiom of software engineering that two 
programmers implementing the same algorithm tend to make errors in the same places.
Moreover, if a graph $\cR(5,5,46)$ exists, each implementation should have found many of its subgraphs by their completely different approaches.
We feel that this provides better confidence than two implementations of the
same approach would provide.

\section{Graphs in $\cR(4,5,n)$ with many edges} \label{s:census}
Recall that in \cite{AnMc18} the authors completed the census of graphs in $\cR(4,5,24)$ that was started in \cite{McRa95}, and that $|\cR(4,5,24)|=352{,}366$. To proceed, we also need to know something about the graphs in $\cR(4,5,n)$ for $n=21, 22, 23$.
As mentioned in the introduction there are far too many such graphs, see Table~\ref{tab1}.

Fortunately we do not need all of the graphs in $\cR(4,5,n)$ but only the ones with a large number of edges. We consider $\cR(4,5,n)$ for $n=24, 23, 22, 21$ in turn. Note that different choices for how many graphs of each type to consider are possible. For example, we could consider $\cR(4,5,22,e=114)$ only at the price of having to consider $\cR(4,5,23,e \geq 118)$ instead of $\cR(4,5,23,e \geq 119)$.
It also seems like it would suffice to consider $\cR(4,5,24,e \geq 128)$ rather than $\cR(4,5,24,e \geq 127)$. But if we did that we would have to weaken Proposition \ref{p:gluered} and perform more of the more difficult gluing operations.

\subsection{$\cR(4,5,24)$}
We have $E(4,5,24) = 132$, and for the proof of Theorem \ref{t:main} it suffices to consider $\cR(4,5,24, e \geq 127)$. We recall from \cite{AnMc18} that we have
\begin{align*}
 |\cR(4,5,24,e=132)| & =  2 \\
 |\cR(4,5,24,e=131)| & =  3 \\
 |\cR(4,5,24,e=130)| & =  32 \\ 
 |\cR(4,5,24,e=129)| & =  147 \\ 
 |\cR(4,5,24,e=128)| & =  843 \\ 
 |\cR(4,5,24,e=127)| & =  3{,}401
\end{align*}

\subsection{$\cR(4,5,23)$}
We have $E(4,5,23) = 122$, and for the proof of Theorem \ref{t:main} it suffices to consider $\cR(4,5,23, e \geq 119)$. We have
\begin{align*}
 |\cR(4,5,23,e=122)| & =  2 \\
 |\cR(4,5,23,e=121)| & =  119 \\
 |\cR(4,5,23,e=120)| & =  7{,}800 \\ 
 |\cR(4,5,23,e=119)| & =  332{,}778
\end{align*}
Finding all of these graphs was the second most time consuming part of the project, taking approximately 5 years of CPU time.

To find these graphs, we used the following strategy: As in \cite{McRa95}, the idea is to glue $\cR(3,5,p)$ to $\cR(4,4,q)$ for $p+q+1=23$. Given $G \in \cR(3,5,p)$ and $H \in \cR(4,4,q)$, denote the vertices of $H$ by $v_0,\ldots,v_{q-1}$. Now consider all tuples $(d_0,\ldots,d_{q-1})$ so that if we glue $G$ to $H$ with $v_i$ adjacent to $d_i$ vertices in $G$, the results lie in $\cR(4,5,23,e \geq 119)$.

Now we can order the vertices of $H$ in a clever way, balancing two objectives: We want a vertex $v_i$ with $d_i$ large near the beginning, and we want a dense subgraph of $H$ near the beginning. We can do this for all $H \in \cR(4,4,q)$ and all tuples $(d_0,\ldots,d_{q-1})$ with sufficiently large sum, and organise the set of such pairs $(H, (d_0,\ldots,d_{q-1}))$ into a tree. The advantage of doing this is that after attaching, say, $k$ vertices to $G$ we produce graphs in $\cR(4,5,p+k+1, e \textnormal{ large})$, and this produces a bottleneck that we can take advantage of.

\subsection{$\cR(4,5,22)$}
We have $E(4,5,22) = 114$, and for the proof of Theorem \ref{t:main} it suffices to consider $\cR(4,5,22, e \geq 113)$. We have
\begin{align*}
 |\cR(4,5,22,e=114)| & = 133 \\
 |\cR(4,5,22,e=113)| & = 30{,}976
\end{align*}
We used a similar strategy as above. This calculation was significantly faster than the one for $\cR(4,5,23,e \geq 119)$.

\subsection{$\cR(4,5,21)$}
We have $E(4,5,21) = 107$, and for the proof of Theorem \ref{t:main} it suffices to consider $\cR(4,5,21, e=107)$. We have
\[
 |\cR(4,5,21,e=107)| = 31.
\]
Once again we used a similar strategy as above, and this calculation was faster still.

Some additional counts that we don't need for this project are given
in Table~\ref{tab1}, and the graphs themselves, including additional classes,
are available on the internet~\cite{R45web}. 

\section{Some linear programming}
The purpose of this section and the next is to define a set of gluing
operations sufficient to prove Theorem~\ref{t:main}.
This set will be given in Theorem~\ref{t:gluered}.

Define the following sets:
\begin{align*}
 A & =  \cR(4,5,24, e \geq 127) \\
 B_1 & =  \cR(4,5,23, e \geq 121) & B_2 &= \cR(4,5,23, e = 120) & B_3 &= \cR(4,5,23, e=119) \\
 C_2 & =  \cR(4,5,22, e = 114) & C_3 &= \cR(4,5,21, e = 113) \\
 D_3 & =  \cR(4,5,21, e = 107)
\end{align*}

Define $\bar{A}$ to be the set of complements of the members of $A$, and so on.
So for example, $\bar{A}  = \cR(5,4,24, e \leq 149)$.
Let
\[
 E = A \cup B_1 \cup B_2 \cup B_3 \cup C_2 \cup C_3 \cup D_3.
\]

Now consider the special case of a graph $F \in \cR(5,5,46)$. Then every vertex of $F$ has degree in $\{21,22,23,24\}$ and we can write~\eqref{e:edgeequation} as
\begin{align*}
 \textnormal{excess}(F) & =  \sum_{d(v) = 24} \bigl( e(F_v^-) - e(F_v^+) + 24 \bigr) \\
 & {\quad}+  \sum_{d(v) = 23} \bigl( e(F_v^-) - e(F_v^+) \bigr) \\
 & {\quad}+  \sum_{d(v) = 22} \bigl( e(F_v^-) - e(F_v^+) - 22 \bigr) \\
 & {\quad}+  \sum_{d(v) = 21} \bigl( e(F_v^-) - e(F_v^+) - 42 \bigr) = 0.
\end{align*}

On the first line, with $d(v) = 24$, $F_v^- \in \cR(5,4, 21)$ while $F_v^+ \in \cR(4,5,24)$, and so on. We can rewrite $\excess(F)$ as
\begin{align*}
 \textnormal{excess}(F) & =  \sum_{d(v) = 24} \bigl( (e(F_v^-) - 104) + (127 - e(F_v^+)) + 1 \bigr) \\
 & {\quad}+  \sum_{d(v) = 23} \bigl( (e(F_v^-) - 119) + (118 - e(F_v^+)) + 1 \bigr) \\
 & {\quad}+  \sum_{d(v) = 22} \bigl( (e(F_v^-) - 135) + (112 - e(F_v^+)) + 1 \bigr) \\
 & {\quad}+  \sum_{d(v) = 21} \bigl( (e(F_v^-) - 149) + (106 - e(F_v^+)) + 1 \bigr).
\end{align*}

Given such an $F \in \cR(5,5,46)$, suppose each $F_v^+ \not \in E$ and each $F_v^- \not \in \bar{E}$. Then each vertex $v \in V(F)$ contributes at least $1$ to $\excess(F)$, so $\excess(F) \geq 46$ and hence we cannot have $\textnormal{excess}(F) = 0$. This suggests a proof strategy: Deal with the relatively small number of graphs in $E$ (and $\bar{E}$) separately, and then use the above equation for $\textnormal{excess}(F)$ to finish.

\begin{remark}
As alluded to above, the above argument works with $\cR(4,5,24,e \geq 128)$ instead of $\cR(4,5,24, e \geq 127)$. But we include $e=127$ because otherwise we cannot prove Theorem \ref{t:gluered}.
\end{remark}

\section{Gluing pairs of pointed graphs} \label{s:gluepairs}
The obvious strategy is to consider all pointed graphs constructed from $E$, and glue every pair of pointed graphs of type $K$ for each $K \in \cR(3,5,d)$. If we can do this, it will imply that there are at most $4$ vertices of $F \in \cR(5,5,46)$ with neighbourhoods in $E$ because with $5$ such vertices there must be an edge between two of them and gluing along that edge will detect $F$. Similarly this implies that there are at most $4$ vertices with dual neighbourhoods in $\bar{E}$. Each $F_v^+$ or $F_v^-$ not in $E$ decreases the estimate of $\excess(F)$ by at most $5$, and we get $\excess(F) \geq 46 - 8 \cdot 5 = 6$.

There are two problems with this strategy:

\noindent
\textbf{Problem 1:}
If we glue pointed graphs $(G,a) \in \cR(4,5,n_1, K)$ and $(H,b) \in \cR(4,5,n_2,K)$ for $K \in \cR(3,5,d)$ along an edge the output is a collection of graphs in 
$\cR(5,5,n_1+n_2-d)$. This worked well when $n_1=n_2=24$ and $d \leq 11$, as this produced graphs in $\cR(5,5, n \geq 37)$ and there are not very many of those. (Well, actually there are lots of such graphs. But there are very few such graphs that also have two vertices of degree $24$.) We solve that by including additional vertices, as indicated in Figure \ref{fig:pic46}.

\noindent
\textbf{Problem 2:}
As we decrease $n_1$ and $n_2$, and increase $d$, the gluing calculations take a lot longer. This should not be surprising, as we are starting with fewer specified edges. We put a lot of work into optimising the gluing program, but even so the calculation would have taken too long. We solve this by showing that it suffices to glue only some of the pairs of pointed graphs from $E$.

Define the following sets of graphs:
\begin{align*}
 E_1 & =  A \cup B_1, \\
 E_2 & =  B_2 \cup C_2, \\
 E_3 & =  B_3 \cup C_3 \cup D_3.
\end{align*}

Define $\bar{E}_1,\bar{E}_2,\bar{E}_3$ to be the sets of complementary graphs.
Given a graph $G \in \cR(4,5,n)$, let $P(G)$ denote the corresponding set of pointed graphs. This set usually consists of $n$ pointed graphs, although some pointed graphs might be isomorphic.
Let $P_k(G)$ denote the set of pointed graphs obtained as follows:
First, find the $n$ pointed graphs for $G$ (without removing isomorphic graphs).
Then sort them according to a heuristic measure of difficulty, and throw away the $k-1$ most difficult pointed graphs. At this point we can throw away isomorphic copies of the remaining pointed graphs.

Given $F \in \cR(5,5,46)$, we can consider the induced subgraph $F[E]$ of $F$ on the vertices with neighbourhood in $E$. Similarly, let $F[\bar{E}]$ denote the induced subgraph of $F$ on the vertices with dual neighbourhood in $\bar{E}$.

In the following, we abuse notation by saying that a vertex $v \in V(F)$ is in $E$ if its neighbourhood is in $E$, and similarly for $E_i$.

\begin{lemma} \label{l:R5417deg8}
Any graph $G \in \cR(5,4,17)$ has a vertex of degree at least $8$.
\end{lemma}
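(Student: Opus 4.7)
The statement is symmetric under complementation: a graph $G$ lies in $\cR(5,4,17)$ if and only if $\bar G$ lies in $\cR(4,5,17)$, and $\Delta(G) \geq 8$ if and only if $\delta(\bar G) \leq 8$. So my plan is to prove the equivalent statement that no $H \in \cR(4,5,17)$ has $\delta(H) \geq 9$. Assuming for contradiction that such $H$ exists, I fix a vertex $v$ with $d_H(v) = \delta(H) = d \geq 9$ and set $K := H_v^+ \in \cR(3,5,d)$ and $L := H_v^- \in \cR(4,4,16-d)$. Since $R(3,5) = 14$, we have $d \leq 13$, and so $d \in \{9,10,11,12,13\}$ and $|L| \in \{3,4,5,6,7\}$.

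Both $\cR(3,5,m)$ for $m \leq 13$ and $\cR(4,4,m)$ for $m \leq 7$ are small, fully catalogued sets, so the list of candidate pairs $(K,L)$ is finite and short. For each such pair the remaining data is the bipartite edge set $M$ between $V(K)$ and $V(L)$, which is controlled by four constraints: (i) no $K_4$ and no independent $5$-set in the assembled $H$; (ii) each $w \in V(K)$ satisfies $|N_L(w)| \geq 8 - d_K(w)$, since $d_H(w) \geq 9$ and $w$ is adjacent to $v$; (iii) each $x \in V(L)$ satisfies $|N_K(x)| \geq 9 - d_L(x)$, since $x$ is not adjacent to $v$; (iv) for each $x \in V(L)$ the induced subgraph $K[V(K) \setminus N_K(x)]$ has independence number at most~$3$, for otherwise $\{x\}$ together with an independent $4$-set of that subgraph would yield an independent $5$-set of $H$. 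A useful consequence of (i) is that for each edge $xy$ of $L$ the common $K$-neighbourhood $N_K(x) \cap N_K(y)$ must be edgeless, else an edge $w_1 w_2$ inside it makes $\{x,y,w_1,w_2\}$ a $K_4$.

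Double-counting the edges of $M$ via (ii) and (iii) will eliminate many candidate pairs directly, and (iv) further forces each $x \in V(L)$ to have $N_K(x)$ meet every independent $4$-set of $K$, a strong structural condition that discards most of the survivors. The remaining cases I would finish by a short backtracking search that extends $M$ one $L$-vertex at a time, pruning on (i)--(iv). The main obstacle I expect is the case $d = 9$, where $|L| = 7$ is largest and the lower bounds in (iii) are loosest, so the most attachments are a priori feasible; this case is genuinely a small finite computation rather than a one-line inequality. Since the total number of pairs $(K,L)$ is modest, the entire verification is easily completed, yielding the required contradiction.
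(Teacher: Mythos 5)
Your approach is correct in conception but takes a genuinely different route from the paper. The paper disposes of Lemma~\ref{l:R5417deg8} with a one-line pigeonhole observation plus a direct census: if $G\in\cR(5,4,17)$ has at least $60$ edges then the degree sum is at least $120>17\cdot7$, forcing some degree $\ge 8$; so it suffices to enumerate $\cR(5,4,17,e\le 59)$ (which the authors did, finding $7147$ graphs) and verify the claim on each. You instead pass to the complement, assume $H\in\cR(4,5,17)$ with $\delta(H)\ge 9$, fix a minimum-degree vertex $v$, and decompose $H$ around $v$ into $K=H_v^+\in\cR(3,5,d)$ with $9\le d\le 13$ and $L=H_v^-\in\cR(4,4,16-d)$, then reconstruct the bipartite part $M$ subject to the clique/independent-set constraints and the degree lower bounds. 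The complementation and the degree/neighbourhood bookkeeping in (ii)--(iv) are all correct, and the reduction to a finite search over pairs $(K,L)$ with small catalogued pieces is sound.

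The trade-offs: your decomposition is more structured and gives useful pruning information (the double count of $|M|$, the common-neighbourhood observation, constraint (iv)), but it requires iterating over all pairs $(K,L)\in\cR(3,5,d)\times\cR(4,4,16-d)$ and then searching over bipartite extensions, with case $d=9$ ($|K|=9$, $|L|=7$) the bottleneck. The paper's version plugs directly into the existing machinery for generating $\cR(s,t,n,e\le e_0)$ (also used elsewhere in the paper), uses the edge bound to cap the search at once, and the final verification is a trivial scan of $7147$ graphs. Both ultimately rest on a computer check, so neither buys a computation-free proof; the paper's is simpler to implement and audit given the infrastructure already in place, while yours is more self-contained if one trusts the small published catalogues of $\cR(3,5,m)$ and $\cR(4,4,m)$. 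One caveat: you should state explicitly that the search over $M$ for each pair $(K,L)$ is exhaustive (constraints (ii)--(iv) are necessary but not sufficient, so the backtracker must still check (i) on the assembled graph at the leaves), which you do implicitly but is worth making precise.
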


\begin{proof}
This is an explicit calculation. If $G$ has at least $60$ edges then this is automatic, and we can check this by completing a census of $\cR(5,4,17, e \leq 59)$. There are 7147 such graphs, and all of them have at least one vertex of degree greater than or equal to $8$.
\end{proof}

\begin{lemma} \label{l:R5521deg8}
Suppose $G \in \cR(5,5,21)$ has two non-adjacent vertices of degree at most $4$. Then either $G$ contains a vertex of degree at least $8$ or $G$ contains a $4$-clique $\{w_1,w_2,w_3,w_4\}$ with $\deg_G(w_1)+\deg_G(w_2)+\deg_G(w_3)+\deg_G(w_4) \leq 24$.
\end{lemma}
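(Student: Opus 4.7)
The plan is to proceed by contradiction: assume the maximum degree of $G$ is at most $7$ and produce a $K_4$ of total degree at most $24$. Let $u,v$ be the given non-adjacent vertices of degree at most $4$, set $U=N(u)\cup N(v)$ and $W=V(G)\setminus(\{u,v\}\cup U)$. Since $|U|\leq \deg u+\deg v\leq 8$, we have $|W|\geq 11$.

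The first step pins down $|W|$. Three mutually non-adjacent vertices in $W$, together with $u$ and $v$, would form an independent $5$-set in $G$, so $\alpha(G[W])\leq 2$; combined with $\omega(G[W])\leq 4$, the Ramsey equality $R(3,5)=14$ yields $|W|\leq 13$. Moreover, for any $w\in W$ the non-neighbors of $w$ inside $W$ are pairwise adjacent (otherwise the bound $\alpha(G[W])\leq 2$ is violated), so they form a clique in $G$ of size at most $4$. Hence $\deg_{G[W]}(w)\geq |W|-5$, which combined with $\deg_G(w)\leq 7$ forces $|W|\leq 12$.

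There are then only two cases. If $|W|=12$, every $w\in W$ satisfies $\deg_{G[W]}(w)=7=\deg_G(w)$, so $W$ is isolated from $V\setminus W$ in $G$; the $9$-vertex subgraph $G[V\setminus W]$ lies in $\cR(5,3,9)$, contains $u,v$ with their original degrees, and the task reduces to producing a $K_4$ of degree sum at most $24$ inside this small graph. If $|W|=11$, then $G[W]\in\cR(5,3,11)$ with all degrees in $\{6,7\}$, and a vertex $w\in W$ satisfies $\deg_G(w)=6$ precisely when $\deg_{G[W]}(w)=6$ and $w$ has no neighbor in $U$; letting $W'$ denote the set of such $w$, any $K_4$ inside $G[W']$ has degree sum exactly $24$. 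When $|W'|\geq 9$, the bound $R(4,3)=9$ together with $\alpha(G[W'])\leq 2$ immediately provides such a $K_4$.

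What remains in each case is an explicit finite verification, entirely in the spirit of Lemma~\ref{l:R5417deg8}: both $\cR(3,5,11)$ and $\cR(3,5,12)$ contain only a handful of isomorphism classes, and likewise for the relevant subgraphs of $\cR(3,5,9)$, so one can enumerate all candidates for $G[W]$ (together with the admissible ways to attach the vertices of $U$, which are further constrained by the global degree cap of $7$ and by $\deg u,\deg v\leq 4$) and confirm in each instance that the required $K_4$ exists. The main obstacle is this enumeration, which is substantially smaller than the one in Lemma~\ref{l:R5417deg8} and is entirely routine by computer.
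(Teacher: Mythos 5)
Your proposal is correct in outline but takes a genuinely different route from the paper. The paper first disposes of a degree-$3$ vertex directly via Lemma~\ref{l:R5417deg8} (its complementary neighbourhood lies in $\cR(5,4,17)$), then for two non-adjacent degree-$4$ vertices $v_1,v_2$ it enumerates by computer the $2029$ graphs in $\cR(5,4,16)$ with $\Delta\le 7$ and a vertex of degree $\le 4$ (the candidates for the complementary neighbourhood of $v_1$), extends each by $v_1$ and four further vertices to obtain $148$ graphs in $\cR(5,5,21)$, and checks each for the required $4$-clique. You instead isolate $W$, the vertices adjacent to neither $u$ nor $v$, and use $R(3,5)=14$ plus the clique structure of non-neighbourhoods in $G[W]$ to pin down $|W|\in\{11,12\}$ (and hence, as a pleasant byproduct, to force $\deg u=\deg v=4$, so the degree-$3$ case is handled automatically rather than by appeal to Lemma~\ref{l:R5417deg8}). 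That structural reduction is sound, as are the observations that $G[W]\in\cR(5,3,|W|)$, that $W$ is isolated when $|W|=12$, and that $|W'|\ge 9$ yields the $K_4$ for free via $R(4,3)=9$. What both proofs have in common is that they ultimately defer to a finite computer check; your claim that your residual enumeration (attaching $U$ and $\{u,v\}$ to the candidates for $G[W]$ when $|W|=11$ with $|W'|<9$, and handling $\cR(5,3,9)$ when $|W|=12$) is "substantially smaller" and "routine" is plausible but asserted rather than demonstrated, so strictly speaking this step is left in the same state of uncertainty that the paper's own computation is. In short: a valid and arguably cleaner structural preprocessing, but the same computational dependence at the end.
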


\begin{proof}
This is another explicit calculation. If $G$ has a vertex of degree $3$ then the result follows from Lemma \ref{l:R5417deg8} by considering its dual neighbourhood. If $G$ has two non-adjacent vertices $\{v_1,v_2\}$ of degree $4$ then the dual neighbourhood of $v_1$ is of type $\cR(5,4,16)$ and has a vertex of degree at most $4$. By an explicit calculation there are $2029$ graphs in $\cR(5,4,16)$ with maximum degree at most $7$ and a vertex of degree at most $4$. After adding $v_1$ and $4$ more vertices we find that there are $148$ graphs in $\cR(5,5,21)$ with maximum degree at most $7$ and two non-adjacent vertices of degree at most $4$. And all of these have a $4$-clique satisfying the condition in the lemma.
\end{proof}

\begin{prop} \label{p:gluered}
Given $F \in \cR(5,5,46)$, either $F[E]$ or the complement $\bar{F}[E]$ must contain one of the following:
\begin{enumerate}
 \item Some vertex in $E_1$ adjacent to at least $1$ other vertex in $E$. \label{glue1}
 \item Some vertex in $E_2$ adjacent to at least $1$ other vertex in $E_2$. \label{glue2}
 \item Some vertex in $E_2$ adjacent to at least $5$ other vertices in $E$. \label{glue3}
 \item Some vertex in $E_3 \setminus D_3$ adjacent to at least $8$ other vertices in $E$. \label{glue4}
 \item Some vertex in $D_3$ adjacent to at least $8$ other vertices in $E \setminus D_3$. \label{glue5}
\end{enumerate}
\end{prop}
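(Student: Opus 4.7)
The plan is to argue by contradiction using the excess identity rewritten at the end of Section~4. Suppose neither $F[E]$ nor $\bar F[E]$ contains any of the five structures (1)--(5), and write $V_{E_i} = \{v\in VF\mid F_v^+\in E_i\}$ with $V_{\bar E_i}$ defined dually.

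First I would extract the easy structural consequences. Failure of (1) in $F[E]$ says that no vertex in $V_{E_1}$ has any neighbour in $V_E$, so in particular no two vertices of $V_{E_1}$ are adjacent. Thus $V_{E_1}$ is an independent set in $F$, and $|V_{E_1}|\le 4$ since $F$ contains no $5$-independent set. Applying the same reasoning to $\bar F$ shows that $V_{\bar E_1}$ is a clique in $F$, so $|V_{\bar E_1}|\le 4$. The failure of (2) and its dual give the analogous bounds $|V_{E_2}|, |V_{\bar E_2}|\le 4$.

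Next I would deploy the excess identity. Since $\alpha(v)\le 0$ precisely when $F_v^+\in E$ and $\beta(v)\le 0$ precisely when $F_v^-\in\bar E$ (using the rearrangement of $\excess(F)$ from Section~4), the identity $\excess(F)=0$ yields
\[
 S := \sum_{v\in V_E}\bigl(-\alpha(v)\bigr) + \sum_{v\in V_{\bar E}}\bigl(-\beta(v)\bigr) \;\ge\; 46.
\]
The per-vertex caps on $-\alpha$ are $5,4,2,2,1,1,1$ on $V_A,V_{B_1},V_{B_2},V_{C_2},V_{B_3},V_{C_3},V_{D_3}$ respectively (and symmetric caps for $-\beta$ on the dual classes), so the size bounds from the previous paragraph cap the contributions of $V_{E_1}$ and $V_{E_2}$ to $S$ at $5\cdot 4+2\cdot 4=28$, and likewise for the dual side. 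Since each $V_{E_3}$-- or $V_{\bar E_3}$--vertex saves only $1$, reaching $S\ge 46$ forces $|V_{E_3}|+|V_{\bar E_3}|$ to be large.

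The closing step, which is the main obstacle, is to show this is incompatible with the degree restrictions enforced by the failures of (3)--(5) and their duals. These force every $V_{E_3}$-vertex to have at most $7$ $F$-neighbours in $V_E$ (or at most $7$ in $V_E\setminus V_{D_3}$ for $V_{D_3}$-vertices), and dually for $V_{\bar E_3}$. I would feed these constraints into Lemmas~\ref{l:R5417deg8} and~\ref{l:R5521deg8}: Lemma~\ref{l:R5417deg8} applies to common non-neighbourhoods of adjacent degree-$24$ vertices in $V_A$, which land in $\cR(5,4,17)$ exactly when the common neighbourhood has $19$ vertices, and Lemma~\ref{l:R5521deg8} applies to $21$-vertex induced subgraphs spanned by $V_{E_3}$ or $V_{\bar E_3}$ vertices carrying the low-degree structure imposed by (4) and (5). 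The delicate part is the bookkeeping: vertices may simultaneously lie in $V_E$ and $V_{\bar E}$ and contribute savings from both $\alpha$ and $\beta$, so the final accounting must avoid double-counting while interlocking the degree inequalities from (3)--(5) with the Ramsey bounds on $F$ to close the gap between the linear bound on $S$ and the required $46$.
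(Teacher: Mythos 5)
Your setup matches the paper's: you correctly extract that $V_{E_1}$ is independent in $F$ (so $|V_{E_1}|\le 4$), dually $V_{\bar E_1}$ is a clique, that failure of~(2) bounds $|V_{E_2}|,|V_{\bar E_2}|\le 4$, and your per-vertex caps $5,4,2,2,1,1,1$ on the contribution to the rewritten excess are exactly right. But there is a concrete error in the next step and then a large, unresolved gap. The error: capping each side's $E_1\cup E_2$ contribution by $5\cdot 4 + 2\cdot 4 = 28$ gives $28+28=56\ge 46$, so your claim that "reaching $S\ge 46$ forces $|V_{E_3}|+|V_{\bar E_3}|$ to be large" does not follow from what you wrote. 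You missed that $V_{E_1}\cup V_{E_2}$ is itself an independent set (the $E_1$-vertices have degree $0$ in $F[E]$ and the $E_2$-vertices are pairwise non-adjacent), so in fact $|V_{E_1}|+|V_{E_2}|\le 4$, capping one side's $E_1\cup E_2$ contribution at $20$. Even with this fix, the inequality only forces $|V_{E_3}|+|V_{\bar E_3}|\ge 6$, which is nowhere near a contradiction.

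The decisive ingredient you are missing is the refinement the paper builds on top of $\alpha+\beta\ge 46$: writing $n_{21}$ for the number of vertices of $F[E]$ having degree $21$ in $F$ (these have $F_v^-\in\cR(5,4,24)$), the paper observes that any such vertex whose dual neighbourhood is \emph{not} in $\bar A$ contributes an extra $+1$, giving $\alpha+\beta\ge 46+\max(n_{21}-\bar m_1,0)+\max(\bar n_{21}-m_1,0)$. This forces $m_1,\bar m_1$ upward and is what ultimately makes the numbers collide. On top of that, the paper proves two structural claims (if $\alpha\ge 21$ then $m_1\le 2$; if $\alpha\ge 23$ then essentially $m_1\le 1$ and $n_{21}\ge\alpha-21$, with one exceptional case) via a case split on $m_1\in\{0,1,2\}$ and then on $m_2\in\{0,\dots,4\}$, using facts such as every vertex of a graph in $\cR(5,3,d)$ having degree at least $d-5$, inclusion-exclusion estimates on the non-neighbourhoods $A_i$ of a $4$-clique $\{w_1,\dots,w_4\}$, and Lemmas~\ref{l:R5417deg8} and~\ref{l:R5521deg8}. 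Your sketch gestures at these lemmas but misdescribes where they apply: Lemma~\ref{l:R5417deg8} is invoked on the graph $F[E]$ restricted to $E_3$-vertices (which lies in $\cR(5,4,\cdot)$ once $m_1\le 1$ and the $E_2$-vertices are peeled off), not on common non-neighbourhoods of adjacent degree-$24$ vertices in $V_A$. You also correctly flag the double-counting issue (a vertex may lie in both $V_E$ and $V_{\bar E}$), but without the $n_{21}$ refinement and the case analysis, no amount of careful bookkeeping will close the gap. As written, the proposal establishes the easy preliminary bounds and then stalls precisely where the real work of the proof begins.
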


\begin{proof}
Suppose $F \in \cR(5,5,46)$ has $m_i$ vertices with neighbourhood in $E_i$ and $\bar{m}_i$ vertices with dual neighbourhood in $\bar{E}_i$ for $i=1, 2, 3$. Let $\alpha = 5m_1+2m_2+m_3$ and $\beta=5\bar{m}_1+2\bar{m}_2+\bar{m}_3$. Then $\excess(E) \geq 46  - \alpha - \beta$, so since $\excess(E) = 0$ we get $\alpha + \beta \geq 46$.

Moreover, let $n_{21}$ be the number of vertices in $F[E]$ of degree $21$ in $F$ and define $\bar{n}_{21}$ similarly using $\bar{F}$. Then, if $n_{21} > \bar{m}_1$ we have at least $n_{21}-\bar{m}_1$ vertices in $F$ with dual neighbourhoods in $\cR(5,4,24,e\geq 150)$ and together with the dual consideration it follows that $\alpha + \beta \geq 46 + \max(n_{21}-\bar{m}_1, 0) + \max(\bar{n}_{21}-m_1, 0)$. (This was the reason for considering $\cR(4,5,24, e \geq 127)$ rather than $\cR(4,5,24, e \geq 128)$.)

If $F$ does not satisfy the conclusion of the proposition then the induced subgraph $F[E]$ is in $\cR(5,5,m_1+m_2+m_3)$. Moreover, the vertices in $E_1$ have degree $0$ in $F[E]$, so $m_1 \leq 4$ and the induced subgraph on the remaining vertices is in $\cR(5, 5-m_1, m_2+m_3)$. The vertices in $E_2$ have degree at most $4$ in $F[E]$ and are only adjacent to vertices in $E_3$. Finally, the vertices in $E_3 \setminus D_3$ have degree at most $7$ and the vertices in $D_3$ have degree at most $7 + (n_{21}-1)$.

First we claim that if $\alpha \geq 21$ then $m_1 \leq 2$. To prove this, we do a case by case analysis. Suppose this fails for some $F \in \cR(5,5,46)$. If $m_1=4$ then $m_2+m_3 > 0$, and we get an independent $5$-set on the $4$ vertices in $E_1$ and one additional vertex in $E$, a contradiction. If $m_1=3$ and $m_2 \geq 2$ we again get an independent $5$-set. If $m_1=3$ and $m_2 \leq 1$ then $m_2+m_3 \geq 5$. If all the vertices in $E_2 \cup E_3$ are adjacent we get a $5$-clique. Otherwise we get an independent $5$-set.

Second, we claim that if $\alpha \geq 23$ then either $\alpha=23$, $m_1=2$ and $n_{21} = 13$ or $m_1 \leq 1$ and $n_{21} \geq \alpha - 21$. To prove this, we first consider the case $m_1 = 2$. If $m_1=2$ we must have $m_2 \leq 2$ and $2m_2+m_3 \geq 13$. If $m_2 > 0$ then the vertices in $E_2 \cup E_3$ form a graph in $\cR(5,3,d\geq 11)$. Every vertex in such a graph has degree at least $6$, so $F[E]$ satisfies (\ref{glue3}). If $m_2=0$ then $m_3\geq \alpha-10$ and the vertices in $E_3$ form a graph in $\cR(5,3,\alpha-10)$. If $\alpha > 23$ then $\cR(5,3,\alpha-10)$ is empty. If $\alpha=23$ then every vertex in such a graph has degree at $8$, so either $F[E]$ satisfies (\ref{glue4}) or all $13$ vertices in $\cR(5,3,13)$ are in $D_3$.

To finish proving the claim it suffices to show that if $\alpha = 23$ and $n_{21} \leq 1$ then $F$ satisfies the conclusion of the proposition. (If $\alpha > 23$ and $n_{21} \leq \alpha-22$, simply remove $\alpha-23$ vertices from $E$, starting with vertices of degree $21$ in $F$.) Suppose not. It suffices to consider $m_1=1$ and $m_1 = 0$. We do each case in turn.

If $\alpha=23$, $m_1 = 1$ and $n_{21} \leq 1$ then either $F[E]$ satisfies (\ref{glue1}) or (\ref{glue2}), or $m_2 \leq 3$ and $2m_2+m_3 \geq 18$. The vertices in $E_2 \cup E_3$ form a graph in $\cR(5,4,m_2+m_3)$, and either $F[E]$ satisfies (\ref{glue3}) or the vertices in $E_2$ have degree at most $4$. If $m_2 \leq 1$ then we are done by Lemma \ref{l:R5417deg8}. If $m_2 \geq 2$ then the dual neighbourhood of the first vertex in $E_2$ is in $\cR(5,3,d\geq 10)$ and since every vertex in such a graph has degree at least $5$ we conclude that $F[E]$ satisfies (\ref{glue3}).

If $\alpha=23$, $m_1=0$ and $n_{21} \leq 1$ then either $F[E]$ satisfies (\ref{glue1}) or (\ref{glue2}), or $m_2 \leq 4$ and $2m_2+m_3 \geq 23$. Now we consider each value of $m_2$ separately. If $m_2=3$ or $m_2=4$ then we can pick two vertices in $E_2$ in such a way that the intersection of their dual neighbourhoods form a graph in $\cR(5,3,d \geq 10)$. This follows from an application of the inclusion-exclusion principle. If $m_2=4$, let $\{w_1,w_2,w_3,w_4\}$ be the vertices in $E_2$ and let $A_i$ for $i=1,2,3,4$ be the vertices in $E_3$ not adjacent to $w_i$. Since each $w_i$ has degree at most $4$ in $F[E]$, each $A_i$ has size at least $11$. Let us write $A_{ij}$ for $A_i \cap A_j$, and so on. If the conclusion fails, each double intersection has size at most $7$. Now we use that $\sum |A_i| = 2 \sum |A_{ij}| - 3 \sum |A_{ijk}|$ to conclude that
\[
 \Bigl|\bigcup A_i\Bigr| = \sum |A_i| - \sum |A_{ij}| + \sum |A_{ijk}| \geq \dfrac{2}{3} \sum |A_i| - \dfrac{1}{3} \sum |A_{ij}| \geq 15 + \dfrac{1}{3},
\]
a contradiction since $\bigl|\bigcup A_i\bigr| = 19 - 4 = 15$. The case $m_2=3$ is similar. Since every vertex in such a graph has degree at least $5$, it follows that $F$ satisfies (\ref{glue2}) or (\ref{glue3}).

If $m_2=2$ then $m_3 \geq 19$, and $F[E]$ (or an induced subgraph) lies in $\cR(5,5,21)$. If $F$ does not satisfy the conclusion of the proposition then $F[E] \in \cR(5,5,21)$ has two non-adjacent vertices of degree at most $4$, and maximal degree at most $7$. Now there are some such graphs, so further analysis is required. But we can find all of them, starting from $\cR(5,4,16,e \leq 52)$. Given such a graph $G$, we can consider all $4$-cliques and use an inclusion-exclusion argument. This gives a contradiction as follows: By Lemma \ref{l:R5521deg8} there is a $4$-clique $\{w_1,w_2,w_3,w_4\}$ in $G$ with $\sum \deg_G(w_i) \leq 24$. Let $A_i$ be the neighbours of $w_i$ in $F[V(F)-E]$. Since $3$ of $\{w_1,w_2,w_3,w_4\}$ have degree at least $22$ in $F$ and the last $w_i$ have degree at least $21$ in $F$, we get $\sum |A_i| \geq 3 \cdot 22 + 21 - 24 = 63$. Now we use that $\sum |A_i| = 2 \sum |A_{ij}| - 3 \sum |A_{ijk}|$ to conclude that
\[
 \Bigl|\bigcup A_i\Bigr| = \sum |A_i| - \sum |A_{ij}| + \sum |A_{ijk}| \geq \dfrac{63}{2} - \dfrac{1}{2} \sum |A_{ijk}| \geq 25 + \dfrac{1}{2}.
\]
This yields the desired contradiction, as $\bigl|\bigcup A_i\bigr| \leq 46-21=25$.

If $m_2=1$ then $m_3 \geq 21$ and $F[E]$ (or an induced subgraph) lies in $\cR(5,5,22)$. If $F$ does not satisfy the conclusion of the proposition then $G \in \cR(5,5,22)$ has one vertex of degree at most $4$. The dual neighbourhood of this vertex is in $\cR(5,4,d\geq 17)$, and now we use Lemma \ref{l:R5417deg8} to conclude.

Finally, if $m_2=0$ then $m_3 \geq 23$ and $F[E]$ (or an induced subgraph) lies in $\cR(5,5,23)$. If $F$ does not satisfy the conclusion of the proposition then $G \in \cR(5,5,23)$, and $G$ has maximal degree at most $7$. Such a $G$ must have a $4$-clique on vertices $\{w_1,w_2,w_3,w_4\}$ with each $w_i$ having degree at least $22$ in $F$. Then each $w_i$ is adjacent to at least $15$ vertices in $V(F)-V(G)$, and we obtain a contradiction by using the inclusion-exclusion principle in the same way as above, now with
\[
 \Bigl|\bigcup A_i\Bigr| \geq \dfrac{60}{2} - \dfrac{1}{2} \sum |A_{ijk}| = 24.
\]

Now we can finish the proof. Without loss of generality we can assume $\alpha \geq \beta$, and we immediately get $\alpha \geq 23$. If $\beta \leq 20$ then $\alpha \geq 26 + (\alpha - 21 - \bar{m}_1)$, which implies $\bar{m}_1 \geq 5$, a contradiction. If $\beta = 21$ or $\beta = 22$ we get $\alpha \geq 24 + (\alpha-21-\bar{m}_1)$, which implies $\bar{m}_1 \geq 3$, a contradiction. And if $\beta \geq 23$ we get $\alpha + \beta \geq 46 + (\alpha-21-\bar{m}_1) + (\beta-21-m_1)$, so $m_1 + \bar{m}_1 \geq 4$. If $m_1 \geq 3$ or $\bar{m}_1 \geq 3$ we are done. Otherwise $(\alpha, m_1, n_{21}) = (23, 2, 13)$ and $(\beta, \bar{m}_1, \bar{n}_{21}) = (23,2,13)$ and again we are done.
\end{proof}

Given $F$, it suffices to glue along a single edge in $F[E]$.
Hence it follows from Proposition \ref{p:gluered} that it suffices to do the following gluings:

\begin{thm} \label{t:gluered}
To prove Theorem \ref{t:main} it suffices to do the following gluings:
\begin{enumerate}
 \item Glue $P(E_1)$ to $P(E)$;
 \item Glue $P(E_2)$ to $P(E_2)$;
 \item Glue $P_5(E_2)$ to the rest of $P(E)$;
 \item Glue $P_8(E_3 \setminus D_3)$ to $P(E)$.
 \item Glue $P_8(D_3)$ to $P(E \setminus D_3)$.
\end{enumerate}
\end{thm}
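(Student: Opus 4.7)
The plan is to derive Theorem~\ref{t:gluered} as a direct corollary of Proposition~\ref{p:gluered}, by matching each of the five configurations listed there to one of the five gluings required here. I would assume a hypothetical $F \in \cR(5,5,46)$ and note that, by duality (the gluing inputs are $\cR(4,5)$-pointed graphs and the set $E$ is intrinsic to this class under the complement swap between $F$ and $\bar F$), we may assume the configuration occurs in $F[E]$ rather than $\bar F[E]$. Then for each configuration I would exhibit an edge $ab$ in $F[E]$ for which the pointed pair $((F_b^+, a), (F_a^+, b))$ lies in one of the products of pointed-graph sets used in the theorem. The compatibility condition $K = (F_b^+)_a^+ = (F_a^+)_b^+$ is automatic, so such a pair must arise in the corresponding gluing.

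Configurations (1) and (2) translate immediately: a vertex $v \in E_1$ adjacent to some $w \in E$ gives a pointed pair in $P(E_1)\times P(E)$, covered by gluing~(1); two adjacent vertices in $E_2$ give a pair in $P(E_2)\times P(E_2)$, covered by gluing~(2). These require no reduction.

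The main substance is in configurations (3)--(5), where the theorem only uses the reduced sets $P_k$, formed by discarding the $k-1$ most difficult pointed graphs of each parent graph before isomorphism reduction. The key step is a pigeonhole argument: the distinguished vertex $v$ has $\ell \geq k$ relevant neighbours $w_1, \ldots, w_\ell$, giving $\ell$ distinct entries $(G, w_1), \ldots, (G, w_\ell)$ in the initial list of pointed graphs of $G = F_v^+$; since only $k-1$ entries are discarded, at least one survives in $P_k(G)$, producing a valid pair for the reduced gluing. Applied with $k=5$ this handles configuration~(3) --- once (1) and (2) have been excluded, the neighbours $w_i$ are forced into $E_3$, which is the intended sense of ``the rest of $P(E)$'' --- and applied with $k=8$ it handles configurations (4) and~(5) analogously, with the paired pointed graph living in $P(E)$ or $P(E \setminus D_3)$ as the configuration dictates.

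The main obstacle, though a mild one, is verifying that the pigeonhole step is not spoiled by the fact that the difficulty ordering and the discard are defined per parent graph. Since the $k$ candidates $(G, w_i)$ all share the single parent $G = F_v^+$, aggregating over different $G$'s to form $P_5(E_2)$, $P_8(E_3 \setminus D_3)$, or $P_8(D_3)$ does not interact with the counting. Beyond this and a clean bookkeeping of which configuration excludes which earlier one, the proof reduces to routine case-checking against Proposition~\ref{p:gluered}.
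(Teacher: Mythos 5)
The paper itself gives no detailed proof of Theorem~\ref{t:gluered}, treating it as an immediate consequence of Proposition~\ref{p:gluered} (``Given $F$, it suffices to glue along a single edge in $F[E]$. Hence it follows from Proposition~\ref{p:gluered}\dots''). Your proposal supplies exactly the omitted justification, and it is correct: the pigeonhole observation that a vertex adjacent to $\ell\ge k$ qualifying neighbours yields $\ell$ entries $(G,w_i)$ sharing the single parent $G=F_v^+$, of which at most $k-1$ are discarded in forming $P_k(G)$, is the right reason why the reduced pointed-graph sets suffice in cases (3)--(5), and the duality reduction, the automatic compatibility of $K$, and the observation that a neighbour in $E_1$ or $E_2$ throws one back to configuration (1) or (2) are all handled correctly. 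This is the same route the paper intends.
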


\section{First gluing computation}\label{s:first}
In this section we describe the method used by the first author. It follows the description in Section \ref{s:highlevel}, with $G$, $H$ chosen from the pairs listed in Theorem \ref{t:gluered}. Each gluing problem can be encoded as a SAT problem whose clauses forbid cliques or independent sets of size $5$. We also added symmetry breaking clauses to distinguish the vertices of $L_1$.

If $|G| + |H| - |K| \geq 37$ we chose $L_1$ to be empty, and if $|G| + |H| - |K| < 37$ we chose $L_1$ so that the output graphs (if any) have exactly 37 vertices.

We are entitled to make some additional assumptions about $L_1$, and some experimentation suggested the following: Pick a vertex $v$ in $K$ of maximal degree $d$ in $G \cup_K H$, and choose $L_1$ so that it is adjacent to $\min(|L_1|, 21-d)$ of the vertices in $L_1$. In other words, we try to choose $v$ and $L_1$ so that $v$ has degree $21$. We can do that because any vertex of $F \in \cR(5,5,46)$ has degree at least $21$.

When running our solver, we then prioritised branching on variables corresponding to edges in the neighbourhood of $v$. If $v$ has degree $21$ in $G \cup_K H \cup L_1$ then the neighbourhood of $v$ has to be in $\cR(4,5,21)$, and because $\cR(4,5,21)$ is at least somewhat smaller than $\cR(4,5,19)$ or $\cR(4,5,20)$ (see the appendix) this provided an additional bottleneck in the calculation.

These gluing operations produced a total of 8,485,247 graphs in $\cR(5,5,37)$. None of those extended to $\cR(5,5,38)$.

Our SAT solver was a very simple special purpose solver without advanced features like clause learning and restarts.

\section{Second gluing computation}\label{s:second}

In this section we describe the method used by the second author.
It follows the description in Section~\ref{s:highlevel}, with
$G,H$ chosen from the pairs listed in Theorem~\ref{t:gluered}.

Let $v$ be a vertex in $K$ whose degree $\kmin$ is the least
so far (i.e. given only its connections to $a,b,A,B,K$).
Let $w$ be a vertex in $K$ whose degree $\kmax$ is the greatest
so far that is less than~21.  Possibly $w$ doesn't exist, in which case
references to it should be ignored.

The flexibility in choosing $L_1$ allows us to make some assumptions which
give the SAT solving a head start and also remove some of the symmetry.
\begin{itemize}
   \item[$\bullet$] $|L_1|=1$ :  If $w$ exists, it is adjacent to $L$ so we can
     take $L_1$ adjacent to~$w$.
   \item[$\bullet$] $|L_1|=2$ and $|L|\ge 4$ :  If $\kmin \le 17$, $v$ is adjacent
     to at least 4 vertices of~$L$.  Those 4 vertices must include an edge or else
     they form an independent set of size 5 in conjunction with~$a$.
     We can take $L_1$ to be that edge, with both ends adjacent to~$v$.
     If $18\le\kmin\le 19$, we can similarly take $L_1$ to be an edge with
     at least one end adjacent to~$v$.
   \item[$\bullet$] $|L_1|=3$ and $|L|\ge 9$ :  Since $R(3,4)=9$, $L$ contains
     a triangle which we can take as $L_1$.  If $\kmin \le 14$ we can also add one
     edge between $v$ and $L_1$.
   \item[$\bullet$] $|L_1|=4$ and $|L|\ge 13$ :  By direct computation we find that
     every graph in $\cR(5,4,13)$ contains an induced subgraph consisting of
     two triangles sharing an edge.  So we can assume $L_1$ contains
     this graph of five edges and one non-edge.
   \item[$\bullet$] $|L_1|=5$ and $|L|\ge 14$ :  By direct computation, every graph
     in $\cR(5,4,14)$ contains either $B_1$ or $B_2$ as an induced subgraph.
      \[ 
        \includegraphics[scale=0.45]{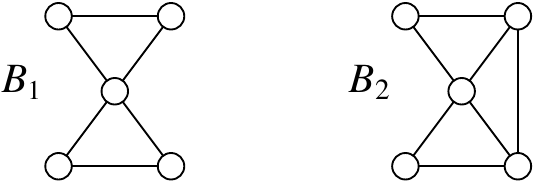} 
      \]
     If either $G$ or $H$ has 21 vertices, its complementary neighbourhood belongs
     to $\cR(5,4,24)$.  By direct computation, we find that none of the 413 graphs in
     $\cR(5,4,14)$ without $B_2$ as an induced subgraph extend to $\cR(5,4,24)$.
     Therefore, we can take $L_1$ to contain all the edges of $B_1$ and the 
     non-edges of $B_2$.  The remaining vertex pair in $L_1$ can be assumed
     to be an edge if either $G$ or $H$ has 21 vertices, and is left unspecified
     otherwise.
\end{itemize}

The computation now proceeded in phases.
Initially, $|L_2|$ was chosen to be 10, but the program automatically switched
to $|L_2|=9$ if too many solutions were being found.
Each phase consisted of these steps:
\begin{itemize}
\item[(1)]   The SAT clauses were propagated and a limited amount of backtracking
   was performed if the propagation was inconclusive.  This lead to either
   elimination, a solution found, or an inconclusive result.
   No attempt was made to find all solutions.
\item[(2)] If satisfiability was still unsettled, the reduced
   SAT program was handed to the SAT solver Glucose~\cite{Glucose} with a
   time limit of 1 minute. Glucose decided satisfiability in 1 second on average.
  
\item[(3)] If satisfiability was still unsettled (a very rare event), a combination of
   Glucose and backtracking was applied.  All such cases showed infeasibility.
\end{itemize}

If a configuration produced a solution, it was passed to the next phase with
$|L_1|$ increased by~1.  Each phase had many fewer configurations to process
than the previous phase.
As a sanity check, a number of cases were run as well using Kissat~\cite{Kissat}
instead of Glucose.

In the first phase, the total number of $(G,H)$-overlaps was about 12 million,
of which 5.6 million were processed by Glucose.
Glucose found 15,248 satisfiable cases, and the remainder were unsatisfiable
including 77 which timed out and needed step~(3).
The 15,248 satisfiable cases went to the second phase, where only 17
cases were found to be satisfiable and were sent to the third phase
where they were unsatisfiable.

This proves that none of the initial configurations can be extended
to $\cR(5,5,46)$.

\section{Conclusions}

Given the theory and the two independent computations, 
Theorem \ref{t:main} has now been firmly established.
We believe that future improvements to the upper bound on $R(5,5)$
will need new theoretical insights, as an excessive amount of
computer time would be required to apply the same method.
Improved upper bounds on some Ramsey numbers that partly rely
on our investigation of $\cR(4,5)$ can be found in the 2024 edition
of Radziszowki's survey~\cite{Ra94}.

\appendix
\section{A census of $\cR(4,5)$}

While only a small part of $\cR(4,5)$ was required for our main theorem,
we record here an update of \cite[Table 3]{McRa95}.
Recall that $e(4,5,n)$ and $E(4,5,n)$ are, respectively, the minimum and
maximum number of edges in $\cR(4,5,n)$.
In Table~\ref{tab1}, we show
\[
    \emin=e(4,5,n),  ~~\emax=E(4,5,n), ~~ N(x)=|\cR(4,5,n,e=x)|.
\]
Floating-point numbers in the final column are estimates obtained
by a statistical method described by the first author~\cite{orderly}.
Briefly, a backtrack program for generating the graphs exhaustively
without isomorphs by adding one vertex at a time is modified to 
randomly reject a fraction of the graphs of each size. 
The large number of graphs in the 18--20 vertex range means that
accurate counts for 21-23 vertices are more difficult to obtain.
In total, $|\cR(4,5)|\approx 2.93\times 10^{19}$.

\begin{table}[ht]
  \centering
  \begin{tabular}{c|cc|cccc|c}
      $n$ & \!$\emin$\! & \!$\emax$\! & \!$N(\emin)$\! & \!$N(\emin{+}1)$\! & \!$N(\emax{-}1)$\! & \!$N(\emax)$\! & $|\cR(4,5,n)|$ \\
      \hline
1 & 0 & 0 & 1 & 0 & 0 & 1 & 1 \\
2 & 0 & 1 & 1 & 1 & 1 & 1 & 2 \\
3 & 0 & 3 & 1 & 1 & 1 & 1 & 4 \\  
4 & 0 & 5 & 1 & 1 & 2 & 1 & 10 \\
5 & 1 & 8 & 1 & 2 & 3 & 1 & 28 \\
6 & 2 & 12 & 1 & 4 & 2 & 1 & 114 \\
7 & 3 & 16 & 1 & 3 & 4 & 1 & 627 \\
8 & 4 & 21 & 1 & 2 & 4 & 1 &  5\,588 \\
9 & 6 & 27 & 1 & 4 & 2 & 1 & 81\,321 \\
10 & 8 & 33 & 1 & 5 & 3 & 1 & 1\,915\,582 \\
11 & 10 & 40 & 1 & 3 & 2 & 1 & 67\,445\,833 \\
12 & 12 & 48 & 1 & 1 & 1 & 1 & 3\,215\,449\,959 \\
13 & 17 & 53 & 1 & 6 & 10 & 2 & 184\,701\,427\,544 \\
14 & 22 & 60 & 2 & 16 & 4 & 1 & $1.113 \times 10^{13}$ \\
15 & 27 & 66 & 1 & 6 & 14 & 1 & $5.960\times 10^{14}$ \\
16 & 32 & 72 & 1 & 2 & 138 & 5 & $2.320\times 10^{16}$ \\
17 & 41 & 79 & 5 & 226 & 86 & 1 & $5.176\times 10^{17}$ \\
18 & 50 & 85 & 326 & 29\,160 & 12\,374 & 74 & $4.98\times 10^{18}$ \\
19 & 57 & 92 & 1 & 1 & 46\,277 & 210 & $1.47\times 10^{19}$ \\
20 & 68 & 100 & 2\,016 & 213\,961 & 822 & 1 & $ 8.57\times 10^{18}$ \\
21 & 77 & 107 & 83 & 5\,940 & 10\,188 & 31 & $5.6\times 10^{17}$ \\
22 & 88 & 114 & 3 & 94 & 30\,976 & 133 & $1.8\times 10^{15}$ \\
23 & 101 & 122 & 1 & 76 & 119 & 2 & $9\times 10^{10}$ \\
24 & 116 & 132 & 9 & 90 & 3 & 2 &  352\,366 \\  
  \end{tabular}
  \smallskip
  \caption{Parameters and counts for $\cR(4,5)$\label{tab1}}
\end{table}

\bibliographystyle{plain}

\end{document}